\newtheorem{theorem}{Theorem}[section]
\newtheorem{definition}[theorem]{Definition}
\newtheorem{lemma}[theorem]{Lemma}
\newtheorem{proposition}[theorem]{Proposition}
\newenvironment{remark}[1]{\medskip\par\noindent{\bf #1.}\rm}
{\medskip\par\noindent}
\newenvironment{proof}[1]{\medskip\par\noindent{\sc #1.\ }}
{~\rule{0.5em}{0.5em}\medskip\par}
\newcommand{\sgn}{{\rm sign}\hskip0.02cm}
\def\vol{\hskip0.02cm{\rm vol}\hskip0.01cm}
\newcommand{\lin}{{\rm lin}\hskip0.02cm}
\begin{document}
\title{\LARGE{\bf{Auerbach bases and minimal volume sufficient enlargements}}}

\author{M.~I.~Ostrovskii}

\date{\today}
\maketitle

\begin{large}

\noindent{\bf Abstract.} Let $B_Y$ denote the unit ball of a
normed linear space $Y$. A symmetric, bounded, closed, convex set
$A$ in a finite dimensional normed linear space $X$ is called a
{\it sufficient enlargement} for $X$ if, for an arbitrary
isometric embedding of $X$ into a Banach space $Y$, there exists a
linear projection $P:Y\to X$ such that $P(B_Y)\subset A$. Each
finite dimensional normed space has a minimal-volume sufficient
enlargement which is a parallelepiped, some spaces have ``exotic''
minimal-volume sufficient enlargements. The main result of the
paper is a characterization of spaces having ``exotic''
minimal-volume sufficient enlargements in terms of Auerbach
bases.\bigskip

\noindent{\bf 2000 Mathematics Subject Classification:} 46B07
(primary), 52A21, 46B15 (secondary).

\section{Introduction}

All linear spaces considered in this paper will be over the reals.
By a {\it space} we mean a normed linear space, unless it is
explicitly mentioned otherwise. We denote by $B_X$ the closed unit
ball of a space $X$. We say that subsets $A$ and $B$ of finite
dimensional linear spaces $X$ and $Y$, respectively, are {\it
linearly equivalent} if there exists a linear isomorphism $T$
between the subspace spanned by $A$ in $X$ and the subspace
spanned by $B$ in $Y$ such that $T(A)=B$. By a {\it symmetric} set
$K$ in a linear space we mean a set such that $x\in K$ implies
$-x\in K$.
\medskip

Our terminology and notation of Banach space theory follows
\cite{JL01}. By $B_p^n$, $1\le p\le\infty$, $n\in \mathbb{N}$ we
denote the closed unit ball of $\ell_p^n$. Our terminology and
notation of convex geometry follows \cite{Sch93}. A Minkowski sum
of finitely many line segments is called a {\it zonotope}.
\medskip

We use the term {\it ball}~ for a symmetric, bounded, closed,
convex set with interior points in a finite dimensional linear
space.

\begin{definition} {\rm\cite{Ost96} A ball in a finite dimensional normed space $X$ is called a
{\it sufficient enlargement} (SE) for $X$ (or of $B_X$) if, for an
arbitrary isometric embedding of $X$ into a Banach space $Y$,
there exists a projection $P:Y\to X$ such that $P(B_Y)\subset A$.
A sufficient enlargement $A$ for $X$ is called a {\it
minimal-volume sufficient enlargement} (MVSE) if $\vol A\le\vol D$
for each SE $D$ for $X$.}
\end{definition}

It was proved in \cite[Theorem 3]{Ost08} that each MVSE is a
zonotope generated by a totally unimodular matrix and the set of
all MVSE (for all spaces) coincides with the set of all space
tiling zonotopes which was described in \cite{Erd99},
\cite{McM75}. It is known (see \cite[Theorem 6]{Ost98}, the result
is implicit in \cite[pp.~95--97]{GMP96}) that a minimum-volume
parallelepiped containing $B_X$ is an MVSE for $X$. It was
discovered (see \cite[Theorem 4]{Ost04} and \cite[Theorem
4]{Ost08}) that spaces $X$ having a non-parallelepipedal MVSE are
rather special: they should have a two-dimensional subspace whose
unit ball is linearly equivalent to a regular hexagon. In
dimension two this provides a complete characterization (see
\cite{Ost04}). On the other hand, the unit ball of
$\ell_\infty^n$, $n\ge 3$, has a regular hexagonal section, but
the only MVSE for $\ell_\infty^n$ is its unit ball (so it is a
parallelepiped). A natural problem arises: To characterize Banach
spaces having non-parallelepipedal MVSE in dimensions $d\ge 3$.
The main purpose of this paper is to characterize such spaces in
terms of Auerbach bases. At the end of the paper we make some
remarks on MVSE for $\ell_1^n$ and study relations between the
class of spaces having non-parallelepipedal MVSE and the class of
spaces having a $1$-complemented subspace whose unit ball is
linearly equivalent to a regular hexagon.

\section{Auerbach bases}

We need to recall some well-known results on bases in finite
dimensional normed spaces. Let $X$ be an $n$-dimensional normed
linear space. For a vector $x\in X$ by $[-x,x]$ we denote the line
segment joining $-x$ and $x$. For $x_1,\dots,x_k\in X$ by
$M(\{x_i\}_{i=1}^k)$ we denote the Minkowski sum of the
corresponding line segments, that is,
$$M(\{x_i\}_{i=1}^k)=\{x:~x=y_1+\dots+y_k \hbox{ for some
}y_i\in[-x_i,x_i],~i=1,\dots,k\}.$$

Let $\{x_i\}_{i=1}^n$ be a basis in $X$, its {\it biorthogonal
functionals} are defined by $x_i^*(x_j)=\delta_{ij}$ (Kronecker
delta). The basis $\{x_i\}_{i=1}^n$ is called an {\it Auerbach
basis} if $||x_i||=||x^*_i||=1$ for all $i\in\{1,\dots,n\}$.
According to \cite[Remarks to Chapter VII]{Ban32} H.~Auerbach
proved the existence of such bases for each finite dimensional
$X$.

\begin{remark}{Historical comment} The book \cite{Ban32} does not
contain any proofs of the existence of Auerbach bases. The two
dimensional case of Auerbach's result was proved in \cite{Aue30}.
Unfortunately Auerbach's original proof in the general case seems
to be lost. Proofs of the existence of Auerbach bases discussed
below are taken from \cite{Day47} and \cite{Tay47}. The paper
\cite{Pli95} contains interesting results on relation between
upper and lower Auerbach bases (which are defined below) and
related references.
\end{remark}

It is useful for us to recall the standard argument for proving
the existence of Auerbach bases (it goes back at least to
\cite{Tay47}). Consider the set $N(=N(X))$ consisting of all
subsets $\{x_i\}_{i=1}^n\subset X$ satisfying $||x_i||=1$,
$i\in\{1,\dots,n\}$. It is a compact set in its natural topology;
and the $n$-dimensional volume of $M(\{x_i\}_{i=1}^n)$ is a
continuous function on $N$. Hence it attains its maximum on $N$.
Let $U\subset N$ be the set of $n$-tuples on which the maximum is
attained. It is easy to see that each $\{x_i\}_{i=1}^n\in U$ is a
basis (for linearly dependent sets the volume is zero). Another
important observation is that $M(\{x_i\}_{i=1}^n)\supset B_X$ if
$\{x_i\}_{i=1}^n\in U$. In fact, if there is $y\in B_X\backslash
M(\{x_i\}_{i=1}^n)$ then (since the volume of a parallelepiped is
the product of the length of its height and the
$(n-1)$-dimensional volume of its base), there is
$i\in\{1,\dots,n\}$ such that replacing $x_i$ by $y$ we get a
parallelepiped whose volume is strictly greater the volume of
$M(\{x_i\}_{i=1}^n)$. Since we may assume $||y||=1$, this is a
contradiction with the definition of $U$.
\medskip

The following lemma shows that each basis from $U$ is an Auerbach
basis.

\begin{lemma}\label{L:Auer} A system $\{x_i\}_{i=1}^n\in N$ is an Auerbach basis if and only
if $M(\{x_i\}_{i=1}^n)\supset B_X$.
\end{lemma}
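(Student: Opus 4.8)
The plan is to exploit the description of $M(\{x_i\}_{i=1}^n)$ in terms of the biorthogonal functionals. First I would dispose of the degenerate case: if $\{x_i\}_{i=1}^n$ is linearly dependent, then it is not a basis, hence not an Auerbach basis, while $M(\{x_i\}_{i=1}^n)$ is contained in a proper subspace of $X$ and so cannot contain $B_X$, which has nonempty interior. Thus both sides of the asserted equivalence fail, and there is nothing to prove. From now on I may assume $\{x_i\}_{i=1}^n$ is a basis, so that the biorthogonal functionals $x_i^*$ are well defined.

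Next I would record the key identity. Since $\{x_i\}_{i=1}^n$ is a basis, every $x\in X$ has the expansion $x=\sum_{i=1}^n x_i^*(x)\,x_i$, and by definition a point lies in $M(\{x_i\}_{i=1}^n)$ exactly when it can be written as $\sum_i t_i x_i$ with $|t_i|\le 1$. By uniqueness of coordinates these two representations must coincide, which gives
$$M(\{x_i\}_{i=1}^n)=\{x\in X:~|x_i^*(x)|\le 1 \text{ for all } i\in\{1,\dots,n\}\}.$$
With this in hand the inclusion $M(\{x_i\}_{i=1}^n)\supset B_X$ becomes the statement that $|x_i^*(x)|\le 1$ for every $x\in B_X$ and every $i$, that is, $\|x_i^*\|\le 1$ for all $i$. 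So the lemma reduces to showing that the condition ``$\|x_i^*\|\le 1$ for all $i$'' is equivalent to $\{x_i\}_{i=1}^n$ being an Auerbach basis.

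The last step is the observation that collapses this inequality into the required equality. Because $\{x_i\}_{i=1}^n\in N$ we already have $\|x_i\|=1$; combined with $x_i^*(x_i)=1$ this forces $\|x_i^*\|\ge x_i^*(x_i)/\|x_i\|=1$ for every $i$. Hence $\|x_i^*\|\le 1$ for all $i$ holds if and only if $\|x_i^*\|=1$ for all $i$, which together with $\|x_i\|=1$ is precisely the definition of an Auerbach basis. This yields the equivalence in both directions.

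No step here is genuinely hard; the only points needing a little care are the treatment of the linearly dependent case and the justification of the displayed identity for $M(\{x_i\}_{i=1}^n)$, and both are routine once the expansion $x=\sum_i x_i^*(x)x_i$ is invoked.
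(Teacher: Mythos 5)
Your proof is correct and follows essentially the same route as the paper: the identity $M(\{x_i\}_{i=1}^n)=\{x:|x_i^*(x)|\le 1,\ i=1,\dots,n\}$, the reduction of the inclusion to $\|x_i^*\|\le 1$, and the observation that $\|x_i\|=1$ forces $\|x_i^*\|\ge 1$. Your explicit handling of the linearly dependent case is a small extra care the paper leaves implicit, but the argument is the same.
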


\begin{proof}{Proof} It is easy to see that
$$M(\{x_i\}_{i=1}^n)=\{x:~ |x_i^*(x)|\le 1\hbox{ for
}i=1,\dots,n\}$$ for each basis $\{x_i\}_{i=1}^n$. Hence
$M(\{x_i\}_{i=1}^n)\supset B_X$ if and only if $||x_i^*||\le 1$
for each $i$. It remains to observe that the equality $||x_i||=1$
implies $||x_i^*||\ge 1$, $i=1,\dots,n$.\end{proof}

This result justifies the following definition.

\begin{definition} {\rm A basis from $U$ is called an {\it
upper Auerbach basis}.}
\end{definition}

Another way of showing that each finite dimensional space $X$ has
an Auerbach basis was discovered in \cite{Day47} (see also
\cite{PS91}). It was proved that each parallelepiped $P$
containing $B_X$ and having the minimum possible volume among all
parallelepipeds containing $B_X$ is of the form
$M(\{x_i\}_{i=1}^n)$ for some $\{x_i\}_{i=1}^n\in N(X)$. By Lemma
\ref{L:Auer} the corresponding system $\{x_i\}_{i=1}^n$ is an
Auerbach basis.

\begin{definition} {\rm A basis $\{x_i\}_{i=1}^n$ for which
$M(\{x_i\}_{i=1}^n)$ is one of the minimum-volume parallelepipeds
containing $B_X$ is called a {\it lower Auerbach basis}.}
\end{definition}

The notions of lower and upper Auerbach bases are dual to each
other.

\begin{proposition}\label{P:HL} A basis $\{x_i\}_{i=1}^n$ in $X$ is a lower
Auerbach basis if and only if the biorthogonal sequence
$\{x_i^*\}_{i=1}^n$ is an upper Auerbach basis in $X^*$.
\end{proposition}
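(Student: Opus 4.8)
The plan is to reduce the statement to a single volume-duality identity relating $M(\{x_i\}_{i=1}^n)$ in $X$ to $M(\{x_i^*\}_{i=1}^n)$ in $X^*$, and then to read off the two extremal characterizations from Lemma~\ref{L:Auer}.

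First I would fix a linear basis $\{e_i\}_{i=1}^n$ of $X$ together with its dual basis $\{e_i^*\}_{i=1}^n$ of $X^*$, and use these to normalize Lebesgue volume on $X$ and on $X^*$. For an arbitrary basis $\{x_i\}_{i=1}^n$ of $X$ with coordinate matrix $A$ (whose $j$-th column lists the coordinates of $x_j$ in $\{e_i\}$) one has $\vol M(\{x_i\}_{i=1}^n)=2^n|\det A|$, since $M$ is the image of the standard cube under $A$ scaled by $2$. The biorthogonal functionals $\{x_i^*\}_{i=1}^n$ have coordinate matrix $(A^{-1})^{T}$ in the dual basis, because $x_i^*(x_j)=\delta_{ij}$ forces the relevant matrix product to be the identity; hence $\vol M(\{x_i^*\}_{i=1}^n)=2^n/|\det A|$. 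Multiplying, I obtain the key identity
$$\vol M(\{x_i\}_{i=1}^n)\cdot\vol M(\{x_i^*\}_{i=1}^n)=4^n,$$
a positive constant independent of the choice of $\{x_i\}_{i=1}^n$. I would also record the observation used in the proof of Lemma~\ref{L:Auer}, that $M(\{x_i\}_{i=1}^n)\supset B_X$ holds if and only if $\|x_i^*\|\le 1$ for every $i$, together with the fact that in finite dimensions $X=X^{**}$, so the biorthogonal system of $\{x_i^*\}_{i=1}^n$ is again $\{x_i\}_{i=1}^n$ and the assignment $\{x_i\}\leftrightarrow\{x_i^*\}$ is a bijection between bases of $X$ and bases of $X^*$.

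With these in hand the chain of equivalences runs as follows. By definition $\{x_i\}_{i=1}^n$ is a lower Auerbach basis exactly when, among all bases $\{y_i\}$ with $M(\{y_i\})\supset B_X$, the volume $\vol M(\{x_i\}_{i=1}^n)$ is minimal; by the recorded observation the containment is equivalent to $\|x_i^*\|\le 1$ for all $i$. By the volume identity, minimizing $\vol M(\{x_i\}_{i=1}^n)$ over this feasible set is the same as maximizing $\vol M(\{x_i^*\}_{i=1}^n)$ over the same set $\{\|x_i^*\|\le 1\}$. Since $\vol M$ is multilinear in its generators, dilating any generator of norm strictly below $1$ up to norm $1$ strictly increases the volume; hence every maximizer has $\|x_i^*\|=1$ for all $i$, and the maximum over $\{\|x_i^*\|\le 1\}$ coincides with the maximum over $N(X^*)$. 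Therefore the condition on $\{x_i^*\}_{i=1}^n$ is precisely that it lies in $N(X^*)$ and maximizes $\vol M$ there, i.e. that it is an upper Auerbach basis of $X^*$. Reading the bijection $\{x_i\}\leftrightarrow\{x_i^*\}$ in both directions yields the claimed ``if and only if''.

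I expect the only genuinely delicate point to be the volume-duality identity and the bookkeeping around it: verifying that the biorthogonal functionals have coordinate matrix $(A^{-1})^{T}$ and that the normalizations make the product exactly constant. Everything afterward is a soft optimization argument, the equivalence of the constraint $\|x_i^*\|\le 1$ with $\|x_i^*\|=1$ resting only on monotonicity of volume under dilation of a single generator, and the passage between $X$ and $X^*$ on reflexivity. One should take mild care that the feasible set is nonempty and that its maximizers are genuine bases (positive volume); this follows from the existence of lower Auerbach bases already established via Day's theorem in the discussion preceding the proposition.
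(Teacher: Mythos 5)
Your proof is correct and follows essentially the same route as the paper: both hinge on the identity $\vol M(\{x_i\})\cdot\vol M(\{x_i^*\})=\mathrm{const}$ coming from $\det(A)\det\bigl((A^{-1})^{T}\bigr)=1$, and then trade the minimization for the maximization. Your extra care in matching the feasible sets (the dilation argument showing maximizers over $\{\|x_i^*\|\le 1\}$ are normalized) is a detail the paper compresses into one sentence, but it is the same argument.
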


\begin{proof}{Proof} We choose a basis $\{e_i\}_{i=1}^n$ in $X$
and let $\{e_i^*\}_{i=1}^n$ be its biorthogonal functionals in
$X^*$. We normalize all volumes in $X$ in such a way that the
volume of $M(\{e_i\}_{i=1}^n)$ is equal to $1$ and all volumes in
$X^*$ in such a way that the volume of $M(\{e^*_i\}_{i=1}^n)$ is
equal to $1$ (one can see that normalizations do not matter for
our purposes).

Let $K=(x_{i,j})_{i,j=1}^n$ be the matrix whose columns are
coordinates of an Auerbach basis $\{x_j\}_{j=1}^n$ with respect to
$\{e_i\}_{i=1}^n$; and let $K^*=(x^*_{i,j})_{i,j=1}^n$ be a matrix
whose rows are coordinates of $\{x^*_i\}_{i=1}^n$ (which is an
Auerbach basis in $X^*$) with respect to $\{e^*_j\}_{j=1}^n$. Then
$K^*\cdot K=I$ (the identity matrix). Therefore
$$|\det K^*|\cdot|\det K|=1.$$
Hence $\vol(M(\{x_i\}_{i=1}^n)\cdot \vol(M(\{x^*_i\}_{i=1}^n)=1$,
and one of these volumes attains its maximum on the set of
Auerbach bases if and only if the other attains its minimum.
\end{proof}

\section{The main result}

\begin{theorem}\label{T:main} An $n$-dimensional normed linear space $X$ has a
non-parallelepipedal MVSE if and only if $X$ has a lower Auerbach
basis $\{x_i\}_{i=1}^n$ such that the unit ball of the
two-dimensional subspace $\lin\{x_1,x_2\}$ is linearly equivalent
to a regular hexagon.
\end{theorem}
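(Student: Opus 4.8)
The plan is to prove both implications by reducing, via the zonotope volume formula, to the two-dimensional case (where the characterization is already known, \cite{Ost04}), and by exploiting the two cited facts: a minimum-volume parallelepiped containing $B_X$ is an MVSE, and by \cite[Theorem 3]{Ost08} every MVSE is a space-tiling zonotope generated by a totally unimodular matrix. I will also use repeatedly the following elementary sufficient condition for the sufficient-enlargement property: if a zonotope $M(\{a_k\}_{k=1}^m)$ admits a representation of the identity $I_X=\sum_{k=1}^m a_k\otimes f_k$ with $f_k\in B_{X^*}$, then it is an SE for $X$; indeed, extending each $f_k$ to $\tilde f_k\in Y^*$ with $\|\tilde f_k\|=\|f_k\|$ by Hahn--Banach and setting $P=\sum_k a_k\otimes\tilde f_k$ produces a projection onto $X$ with $P(B_Y)\subset M(\{a_k\})$. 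The recurring computational tool is that for a system of generators of which all but two lie outside the plane $\lin\{x_1,x_2\}$, the volume of the resulting zonotope factors as the volume of the complementary parallelepiped times the planar area of the two-dimensional part.

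\emph{Sufficiency.} Suppose $\{x_i\}_{i=1}^n$ is a lower Auerbach basis with $B_{\lin\{x_1,x_2\}}$ linearly equivalent to a regular hexagon. Then $P_0:=M(\{x_i\}_{i=1}^n)$ is a minimum-volume parallelepiped, hence an MVSE, and $\vol P_0=:V_{\min}$ is the least volume of any SE. In the plane $X_2:=\lin\{x_1,x_2\}$ the two-dimensional theorem furnishes a hexagonal MVSE $H=\sum_{j=1}^3[-y_j,y_j]$ with $\operatorname{vol}_2 H=\operatorname{vol}_2([-x_1,x_1]+[-x_2,x_2])$. I then set $Z:=H+\sum_{i=3}^n[-x_i,x_i]$. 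Using the determinant factorization above (only the $\binom{3}{2}$ choices of two planar generators together with $x_3,\dots,x_n$ contribute), I get $\vol Z=\vol P_0\cdot \operatorname{vol}_2 H/\operatorname{vol}_2([-x_1,x_1]+[-x_2,x_2])=V_{\min}$. It then remains to exhibit $Z$ as an SE, i.e.\ to produce $f_k\in B_{X^*}$ with $\sum_k a_k\otimes f_k=I_X$ for the generators $a_k\in\{y_1,y_2,y_3,x_3,\dots,x_n\}$: one pairs each $x_i$ ($i\ge3$) with $x_i^*$, and one must pair $y_1,y_2,y_3$ with functionals $g_1,g_2,g_3\in B_{X^*}$ realizing $\sum_j y_j\otimes g_j=\pi$, the projection onto $X_2$ along $\lin\{x_3,\dots,x_n\}$. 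Being an SE of volume $V_{\min}$, $Z$ is then a non-parallelepipedal MVSE.

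\emph{Necessity.} Let $A$ be a non-parallelepipedal MVSE. By \cite[Theorem 3]{Ost08} it is a space-tiling zonotope $A=M(\{z_k\})$ whose generators are the columns of a totally unimodular matrix, and $A\supset B_X$ with $\vol A=V_{\min}$. Since a space-tiling zonotope is a parallelohedron, each of its $2$-faces is a parallelogram or a centrally symmetric hexagon \cite{McM75}, so non-parallelepipedality forces a hexagonal $2$-face, that is, three coplanar generators satisfying, by total unimodularity, a relation $z_c=\pm z_a\pm z_b$. I would then show, using minimality of $\vol A$, that $B_X\cap\lin\{z_a,z_b\}$ is an affinely regular hexagon touching all six edges of that $2$-face (otherwise $A$ could be shrunk, contradicting the MVSE property), and that after normalizing $z_a,z_b$ to $x_1,x_2$ and completing by $n-2$ further generators to a basis, the resulting system is a lower Auerbach basis, the complementary parallelepiped being forced to minimal volume by the same factorization together with $\vol A=V_{\min}$.

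The main obstacle, common to both directions, is the verification that the construction genuinely yields a sufficient enlargement and not merely a body containing $B_X$ of the right volume. Concretely, in the sufficiency direction one must produce the planar functionals $g_1,g_2,g_3$ with full dual norm $\le1$: writing $X=X_2\oplus\lin\{x_3,\dots,x_n\}$, the constraint $\sum_j y_j\otimes g_j=\pi$ determines the $g_j$ on $X_2$ but leaves a single common component of $g_1,g_2,g_3$ along $\lin\{x_3^*,\dots,x_n^*\}$ free (the three being tied by the linear dependence among $y_1,y_2,y_3$). The difficulty is that $\|g_j\|_{X^*}$ is governed by the shadow $\pi(B_X)$ of $B_X$ on $X_2$ rather than by the section $B_X\cap X_2$, so this free component must be chosen---using the lower-Auerbach minimality---so that all three norms stay at most $1$. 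The dual phenomenon is exactly what must be controlled in the necessity direction when forcing the section to be hexagonal and the basis to be lower Auerbach. This reconciliation of the equal-volume requirement with the norming condition is where the real work lies; the reduction to two dimensions and the volume factorization are otherwise routine.
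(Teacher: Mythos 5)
Your outline identifies the right objects but leaves unproved exactly the step that constitutes the theorem. In the sufficiency direction you construct the candidate zonotope $Z=H+\sum_{i\ge 3}[-x_i,x_i]$ and compute its volume correctly, but you never produce the functionals $g_1,g_2,g_3\in B_{X^*}$ with $\sum_j y_j\otimes g_j=\pi$; you only observe that they are needed and that their norms are governed by the shadow $\pi(B_X)$ rather than by the section $B_X\cap X_2$. This is not a technical loose end: it is the whole content of the implication. The space $\ell_\infty^n$, $n\ge 3$, has sections linearly equivalent to a regular hexagon, and your volume computation runs verbatim on such a section, yet its only MVSE is a parallelepiped; what fails there is precisely the norming condition. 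The hypothesis that $\{x_i\}$ is a \emph{lower Auerbach} basis is what makes the functionals exist, and the paper's entire ``if'' part is devoted to extracting them: it dualizes via Proposition \ref{P:HL} to get an upper Auerbach basis of $X^*$, embeds $X$ into $\ell_\infty^m$ so that all minors of the representing matrix \eqref{X} have absolute value $\le 1$, massages the matrix until a row of the form $(\pm1,\pm1)$ appears in the first two columns (the case analysis {\bf I}/{\bf II}, which is where hexagonality of the section is actually used), and only then invokes the minimal-volume-projection results of \cite{Ost03}, \cite{Ost04} together with the extension property of $\ell_\infty^m$. None of this is replaced by anything in your sketch. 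A secondary gap on the same side: you need $\operatorname{vol}_2 H=\operatorname{vol}_2 M(\{x_1,x_2\})$, i.e.\ that $M(\{x_1,x_2\})$ is a minimal-area parallelogram for the section $B_X\cap X_2$; this does not follow formally from $M(\{x_i\}_{i=1}^n)$ being volume-minimal for $B_X$ and needs an argument.

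The necessity direction has the dual gap. From McMullen's characterization you correctly extract three coplanar generators with $z_c=\pm z_a\pm z_b$, but the passage from a hexagonal $2$-face of the zonotope $A$ to (i) a regular-hexagonal \emph{section} of $B_X$ and (ii) a \emph{lower Auerbach} basis containing suitable rescalings of $z_a,z_b$ is announced (``I would then show'') rather than carried out; note that the generators of an MVSE are not unit vectors, so they are not themselves basis candidates. In the paper this is where the matrix \eqref{Z} and the minor bounds do the work: the inequalities $|b_{i,1}|\le1$, $|b_{i,2}|\le1$, $|b_{i,1}-b_{i,2}|\le1$ give the hexagonal section, and the identification of the image of $B_{X^*}$ with the symmetric convex hull of the rows, combined with Proposition \ref{P:HL}, gives the lower Auerbach property. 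Both of your directions are also silent on the reduction from general $X$ to polyhedral $X$, which the paper handles separately at the end of each half. As it stands the proposal is a credible plan with the two central verifications missing.
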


\begin{proof}{Proof. ``Only if'' part} We start by considering the
case when the space $X$ is polyhedral, that is, when $B_X$ is a
polytope. In this case we may consider $X$ as a subspace of
$\ell_\infty^m$ for some $m\in {\mathbb{N}}$. Since $X$ has an
MVSE which is not a parallelepiped, there exists a linear
projection $P:\ell_\infty^m \to X$ such that $P(B_\infty^m)$ has
the minimal possible volume, but $P(B_\infty^m)$ is not a
parallelepiped. We consider the standard Euclidean structure on
$\ell_\infty^m$. Let $\{q_1,\dots,q_{m-n}\}$ be an orthonormal
basis in $\ker P$ and let $\{\tilde q_1,\dots,\tilde q_n\}$ be an
orthonormal basis in the orthogonal complement of $\ker P$. As it
was shown in \cite[Lemma 2]{Ost03}, $P(B_\infty^m)$ is linearly
equivalent to the zonotope spanned by rows of $\tilde Q=[\tilde
q_1,\dots,\tilde q_n]$. By the assumption this zonotope is not a
parallelepiped. It is easy to see that this assumption is
equivalent to: there exists a minimal linearly dependent
collection of rows of $\tilde Q$ containing $\ge 3$ rows. This
condition implies that we can reorder the coordinates in
$\ell_\infty^m$ and multiply the matrix $\tilde Q$ from the right
by an invertible $n\times n$ matrix $C_1$ in such a way that
$\tilde QC_1$ has a submatrix of the form
$$\left(
\begin{array}{cccc}
1 & 0 & \dots & 0\\
0 & 1 & \dots & 0\\
\vdots & \vdots & \ddots & \vdots\\
0 & 0 & \dots & 1\\
a_1 & a_2 & \dots & a_n
\end{array}\right),
$$
where $a_1\ne 0$ and $a_2\ne 0$. Let $\mathcal{X}$ be an $m\times
n$ matrix whose columns form a basis of $X$ (considered as a
subspace of $\ell_\infty^m$). The argument of \cite{Ost03} (see
the conditions (1)--(3) on p.~96) implies that $\mathcal{X}$ can
be multiplied from the right by an invertible $n\times n$ matrix
$C_2$ in such a way that $\mathcal{X}C_2$ is of the form
$$\left(
\begin{array}{cccc}
1 & 0 & \dots & 0\\
0 & 1 & \dots & 0\\
\vdots & \vdots & \ddots & \vdots\\
0 & 0 & \dots & 1\\
\sgn a_1 & \sgn a_2 & \dots & *\\
\vdots & \vdots & \ddots & \vdots
\end{array}\right),
$$
where at the top there is an $n\times n$ identity matrix, and all
minors of the matrix $\mathcal{X}C_2$ have absolute values $\le
1$.
\medskip

Observe that columns on $\mathcal{X}C_2$ also form a basis in $X$.
Changing signs of the first two columns and of the first two
coordinates of $\ell_\infty^m$, if necessary, we get that the
subspace $X\subset\ell_\infty^m$ is spanned by columns of the
matrix
\begin{equation}\label{Z}
\left(
\begin{array}{lllll}
1 & 0 & 0 &\dots & 0\\
0 & 1 & 0 &\dots & 0\\
0 & 0 & 1 &\dots & 0\\
\vdots & \vdots & \vdots & \ddots & \vdots\\
0 & 0 & 0 &\dots & 1\\
1 & 1 & b_{n+1,3} & \dots & b_{n+1,n}\\
b_{n+2,1} & b_{n+2,2} & * & \dots & *\\
\vdots & \vdots & \vdots & \ddots & \vdots\\
b_{m,1} & b_{m,2} & * & \dots & *
\end{array}\right),
\end{equation}
in which absolute values of all minors are $\le 1$. This
restriction on minors implies $|b_{i,1}-b_{i,2}|\le 1$,
$|b_{i,1}|\le 1$, and $|b_{i,2}|\le 1$. A routine verification
shows that these inequalities imply that the first two columns
span a subspace of $X\subset\ell_\infty^m$ whose unit ball is
linearly equivalent to a regular hexagon (see \cite[p.~390]{Ost04}
for more details).
\medskip

It remains to show that the columns of \eqref{Z} form a lower
Auerbach basis in $X$. Let us denote the columns of \eqref{Z} by
$\{x_i\}_{i=1}^n$ and the biorthogonal functionals of
$\{x_i\}_{i=1}^n$ (considered as vectors in $X^*$) by
$\{x^*_i\}_{i=1}^n$.
\medskip

We map $\{x^*_i\}_{i=1}^n$ onto the unit vector basis of
$\mathbb{R}^n$. This mapping maps $B_{X^*}$ onto the symmetric
convex hull of vectors whose coordinates are rows of the matrix
\eqref{Z}. In fact, using the definitions we get
$$\left\|\sum_{i=1}^n\alpha_ix^*_i\right\|_{X^*}=
\max\left\{\left|\sum_{i=1}^n \alpha_i\beta_i\right|:~ \max_{1\le
j\le m}\left|\sum_{i=1}^n\beta_ib_{ji}\right|\le 1\right\}.
$$
Therefore, if $\{\alpha_i\}_{i=1}^n\in\mathbb{R}^n$ is in the
symmetric convex hull of $\{b_{ji}\}_{i=1}^n\in\mathbb{R}^n$,
$j=1,\dots,m$, then $$\left|\sum_{i=1}^n \alpha_i\beta_i\right|\le
\max_{1\le j\le m}\left|\sum_{i=1}^n\beta_ib_{ji}\right|\hbox{ and
 }\left\|\sum_{i=1}^n\alpha_ix^*_i\right\|_{X^*}\le 1.$$

On the other hand, if $\{\alpha_i\}_{i=1}^n$ is not in the
symmetric convex hull of $\{b_{ji}\}_{i=1}^n\in\mathbb{R}^n$,
$j=1,\dots,m$, then, by the separation theorem (see, e.g.
\cite[Theorem 1.3.4]{Sch93}), there is $\{\beta_i\}_{i=1}^n$ such
that
$$\max_{1\le j\le
m}\left|\sum_{i=1}^n\beta_ib_{ji}\right|\le 1,\hbox{ but }
\left|\sum_{i=1}^n \alpha_i\beta_i\right|>1,$$ and hence $$
\left\|\sum_{i=1}^n\alpha_ix^*_i\right\|_{X^*}> 1.$$

Thus the restriction on the absolute values of minors of \eqref{Z}
implies that $\{x^*_i\}_{i=1}^n$ is an upper Auerbach basis in
$X^*$. By Proposition \ref{P:HL}, $\{x_i\}_{i=1}^n$ is a lower
Auerbach basis in $X$.
\medskip

Now we consider the general case. Let $Y$ be an $n$-dimensional
space and $A$ be a non-parallelepipedal MVSE for $Y$. By
\cite[Theorem 3]{Ost08} and \cite[Lemma 1]{Ost04} there is a
polyhedral space $X$ such that $B_X\supset B_Y$ and $A$ is an SE
(hence MVSE) for $X$. By the first part of the proof there is a
lower Auerbach basis $\{x_i\}_{i=1}^n$ in $X$ such that the unit
ball of the subspace of $X$ spanned by $\{x_1,x_2\}$ is linearly
equivalent to a regular hexagon. The basis $\{x_i\}_{i=1}^n$ is a
lower Auerbach basis for $Y$ too. In fact, the spaces have the
same MVSE, hence a minimum-volume parallelepiped containing $B_X$
is a also a minimum-volume parallelepiped containing $B_Y$. It
remains to show that the unit ball of the subspace spanned in $Y$
by $\{x_1,x_2\}$ is also a regular hexagon.
\medskip

To achieve this goal we use an additional information about the
basis $\{x_i\}$ which we get from the first part of the proof.
Namely, we use the observation that the vertices of the unit ball
of the subspace $\hbox{lin}(x_1,x_2)$ are: $\pm x_1$, $\pm x_2$,
$\pm (x_1-x_2)$. So it remains to show that $(x_1-x_2)\in B_Y$.
This has already been done in \cite[pp.~617--618]{Ost08}.
\medskip

\noindent{\sc ``If'' part.} First we consider the case when $X$ is
polyhedral. Suppose that $X$ has a lower Auerbach basis
$\{x_i\}_{i=1}^n$ and that $x_1,x_2$ span a subspace whose unit
ball is linearly equivalent to a regular hexagon. Then the
biorthogonal functionals $\{x_i^*\}_{i=1}^n$ form an upper
Auerbach basis in $X^*$. We join to this sequence all extreme
points of $B_{X^*}$. Since $X$ is polyhedral, we get a finite
sequence which we denote $\{x_i^*\}_{i=1}^m$. Then
$$x\mapsto \{x^*_i(x)\}_{i=1}^m$$
is an isometric embedding of $X$ into $\ell_\infty^m$. Writing
images of $\{x_i\}_{i=1}^n$ as columns, we get a matrix of the
form:
\begin{equation}\label{X}
(b_{ij})=\left(
\begin{array}{ccccc}
1 & 0 & 0 &\dots & 0\\
0 & 1 & 0 &\dots & 0\\
0 & 0 & 1 &\dots & 0\\
\vdots & \vdots & \vdots & \ddots & \vdots\\
0 & 0 & 0 &\dots & 1\\
b_{n+1,1} & b_{n+1,2} & * & \dots & *\\
b_{n+2,1} & b_{n+2,2} & * & \dots & *\\
\vdots & \vdots & \vdots & \ddots & \vdots\\
b_{m,1} & b_{m,2} & * & \dots & *
\end{array}\right).
\end{equation}

Since $\{x_i^*\}_{i=1}^n$ is an upper Auerbach basis, absolute
values of all minors of this matrix do not exceed $1$.
\medskip

Now we use fact that the linear span of $\{x_1,x_2\}$ is a regular
hexagonal space in order to show that we may assume that at least
one of the pairs $(b_{k,1},b_{k,2})$ in \eqref{X} is of the form
$(\pm1,\pm1)$. (Sometimes we need to modify the matrix \eqref{X}
to achieve this goal.)
\medskip

The definition of the norm on $\ell_\infty^m$ implies that there
is a $3\times 2$ submatrix $S$ of the matrix $(b_{i,j})$
$(i=1,\dots,m,~ j=1,2)$ whose columns span a regular hexagonal
subspace in $\ell_\infty^3$, and for each
$\alpha_1,\alpha_2\in\mathbb{R}$ the equality
\begin{equation}\label{E:max}\max_{1\le i\le m}|\alpha_1b_{i,1}+\alpha_2b_{i,2}|=
\max_{i\in A}|\alpha_1b_{i,1}+\alpha_2b_{i,2}|\end{equation}
holds, where $A$ is the set of labels of rows of $S$.
\medskip

To find such a set $S$ we observe that for each side of the
hexagon we can find $i\in\{1,\dots,m\}$ such that the side is
contained in the set of vectors of $\ell_\infty^m$ for which the
$i^{th}$ coordinate is either $1$ or $-1$ (this happens because
the hexagon is the intersection of the unit sphere of
$\ell_\infty^n$ with the two dimensional subspace). Picking one
side from each symmetric with respect to the origin pair of sides
and choosing (in the way described above) one label for each of
the pairs, we get the desired set $A$. To see that it satisfies
the stated conditions we consider the operator
$R:\ell_\infty^m\to\ell_\infty^3$ given by
$R(\{x_i\}_{i=1}^n)=\{x_i\}_{i\in A}$. The stated condition can be
described as: the restriction of $R$ to the linear span of the
first two columns of the matrix $(b_{ij})$ is an isometry. To show
this it suffices to show that a vector of norm $1$ is mapped to a
vector of norm $1$. This happens due to the construction of $A$.
\medskip

It is clear from \eqref{X} that the maximum in the left hand side
of \eqref{E:max} is at least $$\max\{|\alpha_1|,|\alpha_2|\}.$$
Hence at least one of the elements in each of the columns of $S$
is equal to $\pm 1$. A (described below) simple variational
argument shows that changing signs of rows of $S$, if necessary,
we may assume that
\medskip

\noindent{\bf (1)} Either $S$ contains a row of the form $(1,0)$
or two rows of the forms $(1,a)$ and $(1,-b)$, $a,b>0$.
\medskip

\noindent{\bf (2)} Either $S$ contains a row of the form $(0,1)$
or two rows of the forms $(c,1)$ and $(-d,1)$, $c,d>0$.\medskip

\noindent{\bf Note.} At this point we allow the changes of signs
needed for {\bf (1)} and for {\bf (2)} to be different.
\medskip

The mentioned above variational argument consists of showing that
in the cases when {\bf (1)} and {\bf (2)} are not satisfied there
are $\alpha_1,\alpha_2\in\mathbb{R}$ such that $$\max_{i\in
A}|\alpha_1b_{i,1}+\alpha_2b_{i,2}|<\max\{|\alpha_1|,|\alpha_2|\}.$$
Let us describe the argument in one of the typical cases (all
other cases can be treated similarly).
\medskip

Suppose that $S$ is such that all entries in the first column are
positive, $S$ contains a row of the form $(1,b)$ with $b>0$, but
not a row of the form $(1,a)$ with $a\le 0$ (recall that absolute
values of entries of \eqref{X} do not exceed $1$). It is clear
that we get the desired pair by letting $\alpha_1=1$ and choosing
$\alpha_2<0$ sufficiently close to $0$.
\medskip

The restriction on the absolute values of the determinants implies
that if the second alternative holds in {\bf (1)}, then $a+b\le 1$
and if the second alternative holds in {\bf (2)}, then $c+d\le 1$.
This implies that the second alternative cannot hold
simultaneously for {\bf (1)} and {\bf (2)}, and thus, there is a
no need in different changes of signs for {\bf (1)} and {\bf
(2)}.\medskip

Therefore it suffices to consider two cases:\medskip

{\bf I.} The matrix $S$ is of the form

\begin{equation}\label{E:I}
\left(
\begin{array}{cc}
1 & 0 \\
0 & 1 \\
u & v
\end{array}\right).
\end{equation}

{\bf II.} The matrix $S$ is of the form

\begin{equation}\label{E:II}
\left(
\begin{array}{rc}
1 & 0 \\
c & 1 \\
-d & 1
\end{array}\right).
\end{equation}

Let us show that the fact that the columns of $S$ span a regular
hexagonal space implies that all of its $2\times 2$ minors have
the same absolute values. It suffices to do this for any basis of
the same subspace of $\ell_\infty^3$. The subspace should
intersect two adjacent edges of the cube. Changing signs of the
unit vector basis in $\ell_\infty^n$, if necessary, we may assume
that the points of intersection are of the forms
\begin{equation}\label{E:vec1}\left(\begin{array}{r}
1 \\
1 \\
\alpha
\end{array}\right) \hbox{ and }
\left(\begin{array}{r}
\beta \\
1 \\
1
\end{array}\right),
~ |\alpha|< 1,~ |\beta|< 1.
\end{equation}
The points of intersection are vertices of the hexagon. One more
vertex of the hexagon is a vector of the form
\begin{equation}\label{E:vec2}\left(
\begin{array}{r}
-1  \\
\gamma \\
1
\end{array}\right),~ |\gamma|<1.
\end{equation}
If the hexagon is linearly equivalent to the regular, then all
parallelograms determined by pairs of vectors of the triple
described in \eqref{E:vec1} and \eqref{E:vec2} should have equal
areas. Therefore the determinants of matrices formed by a unit
vector and two of the vectors from the triple described in
\eqref{E:vec1} and \eqref{E:vec2} should have the same absolute
values. It is easy to see that the obtained equalities imply
$\alpha=\beta=0$. The conclusion follows.
\medskip

In the case {\bf I} the equality of $2\times 2$ minors implies
that $|u|=|v|=1$, and we have found a $(\pm1,\pm1)$ row.
\medskip

In the case {\bf II} we derive $c+d=1$. Now we replace the element
$x_1$ in the basis consisting of columns of \eqref{X} by
$x_1-cx_2$. It is clear that the sequence we get is still a basis
in the same space, and this modification does not change values of
minors of sizes at least $2\times 2$. As for minors of sizes
$1\times 1$, the only column that has to be checked is column
number 1. Its $k$th entry is $b_{k,1}-cb_{k,2}$ be its row. The
condition on $2\times 2$ minors of the original matrix implies
that $|cb_{k,2}-b_{k,1}|\le 1$. The conclusion follows. On the
other hand in the row (from \eqref{E:II}) which started with
$(-d,1)$ we get $(-1,1)$, and in the row which started with
$(c,1)$ we get $(0,1)$. Reordering the coordinates of
$\ell_\infty^m$ (if necessary) we get that the space $X$ has a
basis of the form
\begin{equation}\label{E:Xmodified}
(b_{ij})=\left(
\begin{array}{ccccc}
1 & 0 & 0 &\dots & 0\\
0 & 1 & b_{2,3} &\dots & b_{2,n}\\
0 & 0 & 1 &\dots & 0\\
\vdots & \vdots & \vdots & \ddots & \vdots\\
0 & 0 & 0 &\dots & 1\\
b_{n+1,1} & b_{n+1,2} & * & \dots & *\\
b_{n+2,1} & b_{n+2,2} & * & \dots & *\\
\vdots & \vdots & \vdots & \ddots & \vdots\\
b_{m,1} & b_{m,2} & * & \dots & *
\end{array}\right).
\end{equation}
satisfying the conditions: {\bf (1)}~The absolute values of all
minors do not exceed $1$; {\bf (2)} $|b_{n+1,1}|=|b_{n+1,2}|=1$.
Consider the matrix $D$ obtained from this matrix in the following
way: we keep the values of $b_{n+1,1}$, $b_{n+1,2}$ and the
entries in the first $n$ rows, with the exception of $b_{2,3},
\dots, b_{2,n}$, and let all other entries equal to $0$.
\medskip

The matrix $D$ satisfies the following condition: if some minor of
$D$ is non-zero, then the corresponding minor of
\eqref{E:Xmodified} is its sign. By the results and the discussion
in \cite{Ost03} and \cite{Ost04}, the image of $B_\infty^m$ in $X$
whose kernel is the orthogonal complement of $D$ is a minimal
volume projection which is not a parallelepiped. The extension
property of $\ell_\infty^m$ implies that this image is an MVSE.
\medskip

To prove the result for a general not necessarily polyhedral space
$X$, consider the following polyhedral space $Y$: its unit ball is
the intersection of the parallelepiped corresponding to a lower
Auerbach basis $\{x_i\}$ of $X$ with whose half-spaces, which
correspond to supporting hyperplanes to $B_X$ at midpoints of
sides of the regular hexagon which is the intersection of $B_X$
with the linear span of $x_1,x_2$. As we have just proved the
space $Y$ has a non-parallelepipedal MVSE. Since there is a
minimal-volume parallelepiped containing $B_X$ which contains
$B_Y$, each MVSE for $Y$ is an MVSE for $X$.
\end{proof}

\begin{remark}{Remark} Theorem \ref{T:main} solves Problem 6 posed
in \cite[p.~118]{Ost08b}.
\end{remark}

\section{Comparison of the class of spaces having non-parallelepi\-pe\-dal MVSE with different classes of Banach spaces
}

\subsection{MVSE for $\ell_1^n$}

Our first purpose is to apply Theorem \ref{T:main} to analyze MVSE
of classical polyhedral spaces. For $\ell_\infty^n$ the situation
is quite simple: their unit balls are parallelepipeds and are the
only MVSE for $\ell_\infty^n$. It turns out that the space
$\ell_1^3$ has non-parallelepipedal MVSE, and that for many other
dimensions parallelepipeds are the only MVSE for $\ell_1^n$. To
find more on the problem: characterize $n$ for which the space
$\ell_1^n$ has non-parallelepipedal MVSE, one has to analyze known
results on the Hadamard maximal determinant problem, see
\cite{OS07} for some of such results and related references. In
this paper we make only two simple observations:

\begin{proposition}\label{P:Hadam} If $n$ is such that there exists a
Hadamard matrix of size $n\times n$, then each MVSE for $\ell_1^n$
is a parallelepiped
\end{proposition}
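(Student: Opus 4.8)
The plan is to apply Theorem~\ref{T:main} in its contrapositive form: to show that every MVSE for $\ell_1^n$ is a parallelepiped it suffices to show that $\ell_1^n$ admits no lower Auerbach basis two of whose elements span a subspace whose unit ball is linearly equivalent to a regular hexagon. In fact I would prove the stronger statement that \emph{every} pair of vectors in \emph{every} lower Auerbach basis of $\ell_1^n$ spans a subspace isometric to $\ell_\infty^2$, whose unit ball is a square and hence never a hexagon.

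First I would pass to the dual. Since $(\ell_1^n)^*=\ell_\infty^n$, Proposition~\ref{P:HL} says that $\{x_i\}_{i=1}^n$ is a lower Auerbach basis of $\ell_1^n$ if and only if the biorthogonal system $\{x_i^*\}_{i=1}^n$ is an upper Auerbach basis of $\ell_\infty^n$. Let $K^*$ be the matrix whose rows are the coordinates of the $x_i^*$ in the standard basis; with the normalization of Proposition~\ref{P:HL} the quantity $\vol M(\{x_i^*\}_{i=1}^n)$ is a positive multiple of $|\det K^*|$, and the constraint $\|x_i^*\|_\infty=1$ forces every entry of $K^*$ to lie in $[-1,1]$. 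Thus an upper Auerbach basis of $\ell_\infty^n$ is precisely a matrix of maximal determinant subject to the entrywise bound $1$. This is where the Hadamard hypothesis enters. Relaxing the sphere constraint to the box $[-1,1]^{n\times n}$ does not increase the maximum: the determinant is affine in each entry, so $|\det|$ attains its maximum over the box at a vertex, i.e.\ at a $\pm1$ matrix, which automatically has all rows of $\ell_\infty$-norm $1$. By Hadamard's inequality any matrix in the box satisfies $|\det K^*|\le n^{n/2}$, with equality forcing all entries to be $\pm1$ and the rows to be mutually orthogonal. Hence, when a Hadamard matrix of order $n$ exists, the maximum equals $n^{n/2}$ and \emph{every} upper Auerbach basis of $\ell_\infty^n$ is the set of rows of some Hadamard matrix $H$.

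Dualizing back, from $K^*K=I$ together with $HH^T=nI$ I obtain $K=(K^*)^{-1}=\tfrac1n H^T$, so the lower Auerbach basis is $x_j=\tfrac1n h_j$, where $h_j$ is the $j$th row of $H$ regarded as a coordinate vector of $\ell_1^n$. Now I would evaluate the norm on a coordinate $2$-plane. For $i\ne j$ the rows $h_i,h_j$ are orthogonal $\pm1$ vectors, so they agree in exactly $n/2$ coordinates and disagree in the remaining $n/2$; splitting the defining sum of the $\ell_1$-norm according to these two groups gives
$$\|a x_i+b x_j\|_1=\tfrac1n\Big(\tfrac n2\,|a+b|+\tfrac n2\,|a-b|\Big)=\tfrac12\big(|a+b|+|a-b|\big)=\max(|a|,|b|).$$
Therefore $\lin\{x_i,x_j\}$ is isometric to $\ell_\infty^2$ and its unit ball is a square.

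The main obstacle I expect is the identification of upper Auerbach bases of $\ell_\infty^n$ with Hadamard matrices, which is genuinely a two-sided argument: one must verify both that the box relaxation is realized at a $\pm1$ vertex (so that nothing is lost in dropping the exact-norm constraint) and that the equality case of Hadamard's inequality is attained \emph{only} by true Hadamard matrices. It is precisely this uniqueness that upgrades the conclusion from a single lower Auerbach basis to \emph{all} of them. Granting this, no lower Auerbach basis of $\ell_1^n$ produces a hexagonal $2$-plane, so by Theorem~\ref{T:main} the space $\ell_1^n$ has no non-parallelepipedal MVSE; that is, each MVSE for $\ell_1^n$ is a parallelepiped.
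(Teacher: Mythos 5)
Your proposal is correct and follows essentially the same route as the paper: dualize via Proposition~\ref{P:HL} to upper Auerbach bases of $\ell_\infty^n$, identify these (via the maximal-determinant problem and the equality case of Hadamard's inequality) with Hadamard matrices, and check that any two vectors of the biorthogonal system span a subspace of $\ell_1^n$ whose unit ball is a square. The paper states these two facts without proof ("consists of columns of Hadamard matrices", "isometric to $\ell_1^2$" --- which is the same as your $\ell_\infty^2$ up to isometry); you have simply supplied the details, and your norm computation $\tfrac12(|a+b|+|a-b|)=\max(|a|,|b|)$ is correct.
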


\begin{proof}{Proof} Each upper Auerbach basis for $\ell_\infty^n$
in
such dimensions consists of columns of Hadamard matrices. Hence
their biorthogonal functionals are also (properly normalized)
Hadamard matrices. It is easy to see that any two of them span in
$\ell_1^n$ a subspace isometric to $\ell_1^2$.
\end{proof}

\begin{proposition}\label{P:l_1^3} The $3$-dimensional space $\ell_1^3$ has a non-parallelepipedal MVSE.
\end{proposition}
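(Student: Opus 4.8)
The plan is to apply Theorem \ref{T:main} directly: to show that $\ell_1^3$ has a non-parallelepipedal MVSE it suffices to exhibit a lower Auerbach basis $\{x_1,x_2,x_3\}$ of $\ell_1^3$ such that the unit ball of $\lin\{x_1,x_2\}$ is linearly equivalent to a regular hexagon. By Proposition \ref{P:HL} it is equivalent, and probably cleaner, to work in the dual: $(\ell_1^3)^*=\ell_\infty^3$, and a basis of $\ell_1^3$ is a lower Auerbach basis exactly when its biorthogonal functionals form an upper Auerbach basis in $\ell_\infty^3$. So first I would search for an explicit upper Auerbach basis of $\ell_\infty^3$ whose two of its members span a regular hexagonal plane.

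The natural candidate is to take the three columns of a $\pm 1$ matrix whose determinant has the largest possible absolute value among $3\times 3$ sign matrices (the Hadamard maximal-determinant value in dimension $3$), for instance
\begin{equation}\label{E:detmat}
H=\left(\begin{array}{rrr}
1 & 1 & 1\\
1 & -1 & 1\\
1 & 1 & -1
\end{array}\right),
\end{equation}
which has $|\det H|=4$, the maximum for a $3\times 3$ $\{\pm1\}$ matrix. The columns of $H$ lie on the unit sphere of $\ell_\infty^3$, and by the construction recalled before Lemma \ref{L:Auer} the volume-maximizing property of the parallelepiped $M(\{x_i\})$ is precisely what characterizes upper Auerbach bases; so the second step is to verify that these three columns do maximize the volume of $M$, equivalently that $|\det H|=4$ is maximal, which is a finite check. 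By Lemma \ref{L:Auer} the columns of $H$ then form an upper Auerbach basis of $\ell_\infty^3$.

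The third step is to identify the hexagonal plane. I would show that the subspace of $\ell_\infty^3$ spanned by two suitably chosen columns (or a suitable pair of biorthogonal functionals) has a regular-hexagonal unit ball. Concretely, the intersection of $B_\infty^3$ with an appropriate two-dimensional subspace meeting two adjacent edges of the cube is a hexagon, and the equal-area/equal-determinant condition derived in the ``if'' part of the proof of Theorem \ref{T:main} (the analysis around \eqref{E:vec1}--\eqref{E:vec2}) tells us exactly when it is linearly equivalent to a regular one; a short computation with the relevant $2\times 2$ minors of $H$ confirms this.

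Finally, translating back via Proposition \ref{P:HL}, the biorthogonal system dual to the columns of $H$ is a lower Auerbach basis of $\ell_1^3$, and the plane spanned by the two corresponding basis vectors carries a regular hexagonal unit ball; Theorem \ref{T:main} then yields a non-parallelepipedal MVSE. I expect the main obstacle to be bookkeeping rather than anything deep: one must make sure the chosen pair of vectors really spans a plane whose ball is regular-hexagonal (not merely hexagonal), which requires getting the minor equalities to come out balanced, and one must confirm the maximality of $|\det H|=4$ so that Lemma \ref{L:Auer} applies. Both are routine finite verifications in dimension three.
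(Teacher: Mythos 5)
Your overall strategy is exactly the paper's: the paper's proof takes the columns of the $\pm1$ matrix
$\left(\begin{smallmatrix}1&1&1\\1&1&-1\\1&-1&1\end{smallmatrix}\right)$
(a row/column permutation of your $H$) as an upper Auerbach basis of $\ell_\infty^3$, writes down the biorthogonal system
$\left(\begin{smallmatrix}0&1/2&1/2\\1/2&0&-1/2\\1/2&-1/2&0\end{smallmatrix}\right)$,
and checks that the first two biorthogonal vectors span a regular hexagonal subspace of $\ell_1^3$; Proposition \ref{P:HL} and Theorem \ref{T:main} then finish the argument. Your first two steps (maximal determinant $\Rightarrow$ volume-maximizing $\Rightarrow$ upper Auerbach) are fine, modulo the routine remark that by multilinearity of the determinant the maximum over norm-one triples in $\ell_\infty^3$ is attained at $\pm1$ vectors, and modulo the fact that Lemma \ref{L:Auer} only certifies ``Auerbach''; ``upper Auerbach'' is directly the definition of volume maximizer, which is what your determinant check gives.

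The one step that would fail as written is your third. You propose to show that ``the subspace of $\ell_\infty^3$ spanned by two suitably chosen columns'' of $H$ is regular hexagonal, by intersecting $B_\infty^3$ with that plane and running the minor analysis around \eqref{E:vec1}--\eqref{E:vec2}. But no pair of columns of $H$ spans a hexagonal section of the cube: for instance the span of $(1,1,1)$ and $(1,-1,1)$ is the plane $\{(s,t,s)\}$, whose intersection with $B_\infty^3$ is the square $|s|\le 1$, $|t|\le 1$, and the same happens for the other two pairs. The hexagon required by Theorem \ref{T:main} lives on the $\ell_1^3$ side: it is the unit ball of the span of two of the \emph{biorthogonal} vectors, i.e.\ a section of the octahedron $B_1^3$, not of the cube. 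Your parenthetical ``(or a suitable pair of biorthogonal functionals)'' and your closing paragraph point at the right object, but the concrete verification you describe targets the wrong one. The correct check is immediate: on the span of $x_1^*=(0,\tfrac12,\tfrac12)$ and $x_2^*=(\tfrac12,0,-\tfrac12)$ the $\ell_1$-norm of $ax_1^*+bx_2^*$ equals $\tfrac12(|a|+|b|+|a-b|)$, whose unit ball is the affinely regular hexagon with vertices $\pm(1,0)$, $\pm(0,1)$, $\pm(1,1)$. With that substitution your argument closes and coincides with the paper's.
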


\begin{proof}{Proof} The columns of the matrix
$$
\left(
\begin{array}{crr}
1 & 1 & 1\\
1 & 1 & -1\\
1 & -1 & 1
\end{array}\right)
$$
form an upper Auerbach basis in $\ell_\infty^3$. The columns of
the matrix
$$
\left(
\begin{array}{rrr}
0 & \frac12 & \frac12\\
\frac12 & 0 & -\frac12\\
\frac12 & -\frac12 & 0
\end{array}\right)
$$
form a biorthogonal system of this upper Auerbach basis. It is
easy to check that the first two vectors of the biorthogonal
system span a regular hexagonal subspace in $\ell_1^3$.
\end{proof}

\subsection{The shape of MVSE and presence of a $1$-complemented
regular hexagonal space}

It would be useful to characterize spaces having
non-parallelepipedal MVSE in terms of their complemented
subspaces. The purpose of this section is to show that one of the
most natural approaches to such a characterization fails. More
precisely, we show that the presence of a $1$-complemented
subspace whose unit ball is linearly equivalent to a regular
hexagon neither implies nor follows from the existence of a
non-parallelepipedal MVSE.

\begin{proposition} There exist spaces having $1$-complemented subspaces whose unit balls are regular
hexagons but such that each of their MVSE is a parallelepiped.
\end{proposition}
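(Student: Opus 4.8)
The plan is to produce one explicit finite-dimensional space and to check both requirements through Theorem \ref{T:main}. The first thing I would record is a reformulation: by Theorem \ref{T:main}, every MVSE of $X$ is a parallelepiped exactly when no lower Auerbach basis $\{x_i\}$ of $X$ has $\lin\{x_1,x_2\}$ linearly equivalent to a regular hexagon. Both ingredients of this condition --- that $M(\{x_i\})$ be a minimum-volume circumscribed parallelepiped and that $B_X\cap\lin\{x_1,x_2\}$ be linearly equivalent to a regular hexagon --- are invariant under linear isomorphisms of the body $B_X$, since a linear map carries minimum-volume circumscribed parallelepipeds to minimum-volume circumscribed parallelepipeds and hexagons to linearly equivalent hexagons. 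Thus ``all MVSE are parallelepipeds'' is a purely linear-geometric property of $B_X$, whereas $1$-complementation of a subspace is genuinely metric; the whole construction must exploit this gap.

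This reformulation also rules out the cheap attempts and tells me what to avoid. A unit ball that is a parallelepiped or a cross-polytope has a rigid isometric type (every such norm is isometric to $\ell_\infty^n$, resp. $\ell_1^n$), and these spaces have no $1$-complemented hexagonal subspace at all, since a contractive projection of $\ell_\infty^n$, resp. $\ell_1^n$, onto a two-dimensional subspace has range isometric to $\ell_\infty^2$, resp. $\ell_1^2$, i.e. a square, never a hexagon. On the other hand the obvious ways to make a hexagon $1$-complemented --- writing $X=H\oplus_\infty Z$ or $X=H\oplus_1 Z$ with $H$ the hexagonal plane --- reintroduce a lower Auerbach basis whose first two vectors span $H$, so by Theorem \ref{T:main} these spaces \emph{do} have a non-parallelepipedal MVSE. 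The same obstruction kills every body that is linearly a hexagonal prism or bipyramid.

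The construction I would pursue therefore places the hexagon obliquely. Fix a plane $V\subset\mathbb{R}^n$ and a hexagon $H\subset V$ linearly equivalent to a regular one, and build a centrally symmetric body $B_X$ inside the (possibly slanted) cylinder $H+\mathbb{R}\ell$ over $H$ in an axis direction $\ell\notin V$, arranged so that the central section $B_X\cap V$ equals $H$. Then the projection $P$ of $\mathbb{R}^n$ onto $V$ along $\ell$ satisfies $P(B_X)\subseteq P(H+\mathbb{R}\ell)=H=B_X\cap V\subseteq B_X$, so $\|P\|=1$: the hexagonal subspace $V$ is automatically $1$-complemented, at no extra cost. It then remains to shape $B_X$ inside the cylinder so that every minimum-volume circumscribed parallelepiped is a genuine box with no edge-pair spanning $V$ (nor any other central section linearly equivalent to a hexagon). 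Concretely I would realize $X$ as a subspace of $\ell_\infty^m$, write a basis as the columns of an explicit matrix whose rows are extreme points of $B_{X^*}$, exhibit $P$ on that matrix, and then invoke the Auerbach/total-unimodularity bookkeeping from the proof of Theorem \ref{T:main} (all minors of absolute value $\le 1$ for lower Auerbach bases, dualized via Proposition \ref{P:HL}) to certify that no admissible basis yields the forbidden $(\pm1,\pm1)$ hexagonal configuration.

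The main obstacle is precisely this last verification, and the tension is structural: complementation forces a prominent hexagonal ``waist'' in $V$, and such a waist is exactly what tends to appear as the two-dimensional section of a minimal circumscribed parallelepiped. Indeed, any symmetry group acting irreducibly on $V$ would, by averaging projections, force the only candidate projection to be the orthogonal one and simultaneously make the product-type box (two edges in $V$) minimal, reinstating a hexagonal lower Auerbach basis. I would break this symmetry deliberately, using an asymmetric body in the cylinder --- for instance truncating a box so that the circumscribing box stays minimal while only the corners violating the cylinder condition are removed, or twisting the body's ``caps'' off the axis --- so that the minimal circumscribed parallelepiped becomes unique and visibly non-hexagonal; checking its three coordinate sections, equivalently the excluded-minor condition on the representing matrix, then finishes the proof. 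Pinning down that unique minimal box, i.e. proving that no tilted parallelepiped of smaller volume circumscribes the asymmetric body, is the step I expect to demand the most care.
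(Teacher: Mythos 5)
There is a genuine gap, and it is located exactly where you dismiss the ``cheap attempts.'' Your reduction via Theorem \ref{T:main} is the right frame (and is the paper's frame too), but you then assert that the $\ell_1$-sum $X=H\oplus_1 Z$ of a hexagonal plane and another space ``reintroduces a lower Auerbach basis whose first two vectors span $H$,'' and that ``the same obstruction kills every body that is linearly a hexagonal \dots bipyramid.'' This is false, and the body you reject is precisely the paper's example: $X$ is the $\ell_1$-sum of a regular hexagonal space and a one-dimensional space, whose unit ball is the bipyramid over the hexagon. The coordinate projection onto $H$ has norm $1$, so the hexagonal subspace is $1$-complemented; and the paper shows by a direct volume comparison that no lower Auerbach basis can have two vectors in the hexagonal hyperplane: any parallelepiped $M(\{x_i\}_{i=1}^3)\supset B_X$ with $x_1,x_2$ in that hyperplane has volume at least $4\sqrt{3}$ (its projection along $x_3$ must contain the hexagon, and circumscribing a parallelogram about a regular hexagon is inefficient), whereas some tilted parallelepiped containing the bipyramid has volume at most $2\sqrt{3}$. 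Your implicit assumption --- that a prominent hexagonal central section forces a minimum-volume circumscribed parallelepiped to have an edge-pair spanning that section --- is exactly what this computation refutes. (One must also check, as the paper does, that $B_X$ has no \emph{other} central section linearly equivalent to a regular hexagon; you flag this requirement but do not discharge it.)

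Beyond this misstep, the proposal never actually produces a space: the ``oblique cylinder'' construction is a program, not a proof, and you yourself identify its key verification (uniqueness and non-hexagonal shape of the minimal circumscribed parallelepiped) as open. So as written the argument both fails to exhibit an example and rules out, on incorrect grounds, the simplest example that works.
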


\begin{proof}{Proof} Let $X$ be the $\ell_1$-sum of a regular hexagonal
space and a one-dimensional space.
\medskip

(1) The unit ball of the space does not have other sections
linearly equivalent to regular hexagons. This statement can be
proved using the argument presented immediately after equation
\eqref{E:vec2}.
\medskip

(2) Assume that the that the vertices of $B_X$ have coordinates
$\pm(0,0,1)$, $\pm(1,0,0)$,
$\pm\left(\frac12,\pm\frac{\sqrt{3}}2,0\right)$. Denote by $H$ the
hyperplane containing $(1,0,0)$ and $(0,1,0)$. We show that a
lower Auerbach basis cannot contain two vectors in $H$.

In fact, an easy argument shows that the volume of a
parallelepiped of the form $M(\{x_i\}_{i=1}^3)$ containing $B_X$
and such that $x_1,x_2\in H$ is at least $4\sqrt{3}$. On the other
hand, it is easy to check that the volume of a minimal-volume
parallelepiped containing $B_X$ is $\le 2{\sqrt{3}}$.
\end{proof}

\begin{remark}{Remark} The argument of
\cite[pp.~393--395]{Ost04} implies that $\ell_\infty$-sums of a
regular hexagonal space and any space have non-parallelepipedal
MVSE.
\end{remark}

\begin{proposition}\label{P:no_1-compl} The existence of a lower Auerbach basis with two elements of
it spanning a regular hexagonal subspace does not  imply the
presence of a $1$-complemented regular hexagonal subspace.
\end{proposition}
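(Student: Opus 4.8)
The plan is to take $X=\ell_1^3$ itself. By Proposition \ref{P:l_1^3} the space $\ell_1^3$ has a non-parallelepipedal MVSE, so by Theorem \ref{T:main} it carries a lower Auerbach basis two of whose elements span a subspace whose unit ball is linearly equivalent to a regular hexagon. Thus it suffices to show that $\ell_1^3$ has \emph{no} $1$-complemented subspace whose unit ball is linearly equivalent to a regular hexagon. Since a two-dimensional subspace $E$ of $\ell_1^3$ is a central plane and $B_E$ is the corresponding central section of the octahedron $B_{\ell_1^3}$, this amounts to showing that no regular hexagonal central section of $B_{\ell_1^3}$ is the range of a norm-one projection.

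First I would classify the regular hexagonal sections. Writing the section as $E=\{v:\langle n,v\rangle=0\}$ with $n=(a,b,c)$, the section is a hexagon only if all $n_i\ne 0$ (if some $n_i=0$ the plane contains the pair $\pm e_i$ and the section is a parallelogram), and by the sign symmetries of the octahedron I may assume $a,b,c>0$. The six vertices of the section are the points where $E$ meets the edges joining $e_i$ to $-e_j$; a direct computation gives the three antipodal pairs $\pm\frac{1}{a+b}(b,-a,0)$, $\pm\frac{1}{a+c}(c,0,-a)$, $\pm\frac{1}{b+c}(0,c,-b)$. Imposing linear equivalence to the regular hexagon --- each vertex equals the sum of its two neighbours, exactly as in the analysis following \eqref{E:vec2} --- forces $a=b=c$. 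Hence, up to a signed permutation of coordinates, the only regular hexagonal section of $B_{\ell_1^3}$ is $E_0=\{v:x+y+z=0\}$.

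It then remains to check that $E_0$ is not $1$-complemented. Since $E_0$ is a hyperplane, any projection onto it has the form $Pv=v-f(v)\,d$ with $f(v)=x+y+z$ and $d$ chosen so that $f(d)=1$; writing $d=(d_1,d_2,d_3)$ we have $d_1+d_2+d_3=1$. Contractivity forces $\|e_i-d\|_1=\|Pe_i\|_1\le\|e_i\|_1=1$ for $i=1,2,3$, whence $\sum_{i=1}^3\|e_i-d\|_1\le 3$. But $\sum_{i=1}^3\|e_i-d\|_1=\sum_{i=1}^3|1-d_i|+2\sum_{i=1}^3|d_i|\ge|3-\sum_i d_i|+2|\sum_i d_i|=2+2=4$, a contradiction. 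Thus $E_0$ admits no norm-one projection, and since every regular hexagonal section of $B_{\ell_1^3}$ is the image of $E_0$ under an isometry of $\ell_1^3$, none of them is $1$-complemented.

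The delicate point is the classification step: one must pin down the cyclic order of the six vertices (equivalently, which vertex of each antipodal pair is adjacent to which) before writing the regularity relations, and verify that every admissible sign-and-order version of ``middle vertex $=$ sum of its neighbours'' is feasible only when $a=b=c$; this is a finite case check. Alternatively one may bypass the classification entirely by invoking the classical fact that every contractively complemented subspace of $\ell_1^n$ is isometric to some $\ell_1^k$: as the unit ball of a two-dimensional $\ell_1^k$ is a parallelogram and never a hexagon, $\ell_1^3$ can have no $1$-complemented regular hexagonal subspace.
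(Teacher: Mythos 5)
Your proposal is correct, and it reaches the conclusion by a route that differs from the paper's in several respects, so a comparison is worthwhile. The paper's witness is the hyperplane $\{x:\ x_1+x_2+x_3+x_4=0\}$ of $\ell_\infty^4$; its unit ball is the central section of the $4$-cube by that hyperplane, which is the convex hull of the six vertices with two coordinates $+1$ and two coordinates $-1$, i.e.\ a cross-polytope --- so the paper's example is in fact linearly isometric to your $\ell_1^3$. The two proofs are nevertheless genuinely different. You obtain the lower Auerbach basis by citing Proposition \ref{P:l_1^3} together with the ``only if'' half of Theorem \ref{T:main} (one could even skip Theorem \ref{T:main}: the biorthogonal system displayed in the proof of Proposition \ref{P:l_1^3} is a lower Auerbach basis of $\ell_1^3$ by Proposition \ref{P:HL}), whereas the paper verifies the lower Auerbach property directly from the matrix representation. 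For the non-complementation, the paper classifies the hexagonal sections in $\ell_\infty^4$-coordinates (Lemma \ref{L:vertices}) and then tests a candidate projection against specific extreme points (Lemma \ref{L:no_1-compl}); your summation argument --- writing $P=I-f(\cdot)d$ with $f(d)=1$ and deriving $\sum_i\|e_i-d\|_1\ge 4>3$ from two applications of the triangle inequality --- is cleaner and avoids case analysis entirely. Your classification step is sound: with the correct adjacency (the neighbours of $\frac{1}{a+b}(b,-a,0)$ are $\frac{1}{a+c}(c,0,-a)$ and $\frac{1}{b+c}(0,-c,b)$), the relation ``vertex $=$ sum of its neighbours'' forces $b=c$ from the first coordinate and $a=b$ from the third, so the ``finite case check'' you defer does close; this is the same level of routine verification the paper allows itself after \eqref{E:vec2}. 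Your second alternative --- that every $1$-complemented subspace of $\ell_1^n$ is isometric to an $\ell_1^k$ and hence has a parallelogram ball in dimension two --- is also valid and bypasses the classification altogether, at the cost of importing a classical theorem on contractive projections in $L_1$ that the paper does not use.
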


\noindent{\sc Proof.} Consider the subspace $X$ of $\ell_\infty^4$
described by the equation $x_1+x_2+x_3+x_4=0$. The fact that this
space has a non-parallelepipedal MVSE follows immediately from the
fact that the columns of the matrix
$$
\left(
\begin{array}{rrr}
1 & 0 & 0\\
0 & 1 & 0\\
0 & 0 & 1\\
-1 & -1 & -1
\end{array}\right)
$$
form a lower Auerbach basis in $X$ (see the argument after the
equation \eqref{Z}) and any two of them span a subspace whose unit
ball is linearly equivalent to a regular hexagon.
\medskip

So it remains to show that the space $X$ does not have
$1$-complemented subspaces linearly equivalent to a regular
hexagonal space. It suffices to prove the following lemmas. By a
{\it support} of a vector in $\ell_\infty^m$ we mean the set of
labels of its non-zero coordinates.

\begin{lemma}\label{L:vertices} The only two-dimensional subspaces of $X$ which have
balls linearly equivalent to regular hexagons are the spaces
spanned by vectors belonging to $X$ and having intersecting
two-element supports.
\end{lemma}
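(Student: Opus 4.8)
The plan is to pass to the dual description of the sections. For a two-dimensional subspace $W\subset X$ write $\phi_i=x_i|_W$ for the restrictions of the four coordinate functionals; since $X\subset\ell_\infty^4$ is cut out by $\sum_i x_i=0$ we have $\phi_1+\phi_2+\phi_3+\phi_4=0$ in $W^*$, and $B_X\cap W=\{w\in W:\ |\phi_i(w)|\le1,\ i=1,\dots,4\}$. Polarity inside $W$ turns this into $\mathrm{conv}\{\pm\phi_1,\dots,\pm\phi_4\}$, and since the polar of a regular hexagon is again a regular hexagon and polarity respects linear equivalence, the section $B_X\cap W$ is linearly equivalent to a regular hexagon if and only if this symmetric convex hull is. Thus the whole problem becomes: classify, up to a linear automorphism of $W^*\cong\mathbb{R}^2$, the quadruples $(\phi_1,\phi_2,\phi_3,\phi_4)$ of plane vectors with $\sum_i\phi_i=0$ whose symmetric convex hull is an affinely regular hexagon.

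The computational engine is the elementary criterion: the symmetric convex hull $\mathrm{conv}\{\pm u,\pm v,\pm w\}$ of three pairwise non-parallel plane vectors is an affinely regular hexagon if and only if there are signs $\epsilon_1,\epsilon_2,\epsilon_3\in\{\pm1\}$ with $\epsilon_1u+\epsilon_2v+\epsilon_3w=0$ (this is the statement that in a regular hexagon each vertex is the sum of its two neighbours, transported by a linear map). I will also record the easy facts that $B_X\cap W$ is bounded only if the $\phi_i$ span $W^*$, so at most one $\phi_i$ can vanish, and that if exactly one vanishes, say $\phi_4=0$, then $\phi_1+\phi_2+\phi_3=0$ with $\phi_1,\phi_2,\phi_3$ nonzero and pairwise independent, whence the criterion holds automatically: $W=X\cap\{x_4=0\}$ and $B_X\cap W$ is a regular hexagon. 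These four coordinate subspaces $W_i=X\cap\{x_i=0\}$ are exactly the subspaces spanned by two vectors of $X$ with intersecting two-element supports, because a vector of $X$ with two-element support $\{a,b\}$ is a multiple of $e_a-e_b$, and $e_a-e_b,\ e_b-e_c$ span $X\cap\{x_d=0\}$ with $\{d\}=\{1,2,3,4\}\setminus\{a,b,c\}$.

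It remains to rule out the case in which all four $\phi_i$ are nonzero, and this is the heart of the argument. Here I split according to parallelism. If no two of the $\phi_i$ are parallel, then for $\mathrm{conv}\{\pm\phi_i\}$ to be a hexagon exactly one antipodal pair must be non-extreme; after relabelling it is $\pm\phi_4$, the hexagon equals $\mathrm{conv}\{\pm\phi_1,\pm\phi_2,\pm\phi_3\}$, and the criterion yields a relation $\epsilon_1\phi_1+\epsilon_2\phi_2+\epsilon_3\phi_3=0$. Feeding this into $\phi_4=-(\phi_1+\phi_2+\phi_3)$ forces either $\phi_4=0$ (equal signs) or $\phi_4$ parallel to one of $\phi_1,\phi_2,\phi_3$ (mixed signs), each contradicting our standing assumptions. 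If exactly one parallel pair occurs, say $\phi_1=\lambda\phi_2$ with $0<|\lambda|\le1$, then the three vertex directions are those of $\phi_2,\phi_3,\phi_4$, the regularity relation reads $\epsilon_1\phi_2+\epsilon_2\phi_3+\epsilon_3\phi_4=0$, and combining it with $\phi_3+\phi_4=-(\lambda+1)\phi_2$ forces $\lambda=0$ (when $\epsilon_2=\epsilon_3$) or $\phi_3\parallel\phi_2$ (when $\epsilon_2=-\epsilon_3$); both are impossible. The remaining patterns of parallelism (two parallel pairs, or three or more parallel functionals) make the section a parallelogram or unbounded, so they produce no hexagon at all. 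This exhausts the all-nonzero case and shows that the only regular hexagonal sections are $W_1,\dots,W_4$.

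The main obstacle I anticipate is precisely the bookkeeping in this last paragraph: one has to keep straight which antipodal pair is extreme and which vector plays the role of the ``middle'' vertex, so that the sign-relation criterion is applied to the correct triple of vertex directions. The single-parallel-pair subcase is the delicate one, since there the collinear pair $\pm\phi_1,\pm\phi_2$ contributes only one vertex direction and one must exploit the length information $|\lambda|\le1$, rather than direction alone, to close the argument.
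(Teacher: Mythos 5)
Your proof is correct, but it runs along a genuinely different (dual) route from the paper's. The paper argues in the primal: it observes that every vertex of a hexagonal section of $B_X$ must lie on a two\-dimensional face of the cube, hence has two coordinates equal to $1$ and $-1$ and two equal to $\pm\alpha$, and then performs a combinatorial case analysis on how the $\pm1$ entries are distributed over the three vertex classes (the ``cycle'' versus ``chain'' alternative of \eqref{E:cyclechain}), invoking the equal-areas criterion set up after \eqref{E:vec2} to force all $\alpha_i=0$ in the cycle case and a rank computation to exclude the chain. You instead restrict the four coordinate functionals to the section $W$, exploit the single linear relation $\phi_1+\phi_2+\phi_3+\phi_4=0$, pass to the polar, and classify quadruples of plane vectors summing to zero whose symmetric convex hull is an affinely regular hexagon via the vertex relation $\epsilon_1u+\epsilon_2v+\epsilon_3w=0$; your decomposition is by the pattern of vanishing and parallelism among the $\phi_i$ rather than by sign patterns of vertex coordinates. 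Your version has the added benefit of proving in passing that the four coordinate sections $X\cap\{x_i=0\}$ really are regular hexagons (the converse direction, which the paper only asserts inside the proof of Proposition \ref{P:no_1-compl}); the paper's primal argument, in exchange, stays closer to the geometry of the cube and reuses machinery already developed in the proof of Theorem \ref{T:main}. One small point of precision: in your single-parallel-pair case with $\epsilon_2=\epsilon_3$ the relation actually gives $1+\lambda=\pm1$, i.e.\ $\lambda\in\{0,-2\}$, so the normalization $|\lambda|\le1$ is genuinely needed to finish --- you do flag this, but the parenthetical ``forces $\lambda=0$'' understates it.
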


\begin{lemma}\label{L:no_1-compl} Two-dimensional subspaces satisfying the conditions
of Lemma \ref{L:vertices} are not $1$-complemented in $X$.
\end{lemma}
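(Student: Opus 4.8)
The plan is to prove Lemma \ref{L:no_1-compl}, which asserts that the two-dimensional regular hexagonal subspaces of $X=\{x\in\ell_\infty^4:x_1+x_2+x_3+x_4=0\}$ identified in Lemma \ref{L:vertices} are not $1$-complemented in $X$. By Lemma \ref{L:vertices} every such subspace $V$ is spanned by two vectors of $X$ with intersecting two-element supports. A typical representative is spanned by $e_1-e_2$ and $e_2-e_3$ (differences of coordinate vectors), and by the symmetry of $X$ under coordinate permutations it suffices to rule out $1$-complementation for one such subspace; I would fix this $V=\lin\{e_1-e_2,\,e_2-e_3\}$.

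First I would set up the norm-one projection criterion explicitly. A projection $P:X\to V$ is determined by its value on a complement; since $\dim X=3$ and $\dim V=2$, the kernel is one-dimensional, spanned by some $w\in X$, and $P$ is the projection onto $V$ along $w$. Requiring $\|P\|=1$ means $P(B_X)\subset B_V$, i.e. $P$ must map the unit ball of $X$ into the hexagon. The vertices of $B_X$ are the coordinate differences $\pm(e_i-e_j)$ (these have norm $1$ in $\ell_\infty^4$ and lie in $X$), so by convexity $\|P\|=1$ is equivalent to $\|P(e_i-e_j)\|_V\le 1$ for all six such generators. I would write $w=(w_1,w_2,w_3,w_4)$ with $\sum w_i=0$, normalized so that $P$ fixes $V$ pointwise, and express each image $P(e_i-e_j)$ in terms of the basis $\{e_1-e_2,e_2-e_3\}$; this reduces the whole question to a finite system of inequalities in the three free parameters of $w$.

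The key step is then to show this system is infeasible. The generators $e_1-e_2$, $e_2-e_3$ already lie in $V$ and are mapped to themselves, so the binding constraints come from the remaining generators, in particular $e_1-e_4$, $e_2-e_4$, $e_3-e_4$ and $e_1-e_3$, whose images must all land in the hexagon with its vertices at $\pm(e_1-e_2)$, $\pm(e_2-e_3)$, $\pm(e_1-e_3)$. I expect that forcing several of these images simultaneously inside the hexagon over-determines $w$: each constraint confines $w$ to a slab, and the intersection of the relevant slabs is empty. The main obstacle will be organizing the case analysis cleanly rather than any single hard estimate — there are six generators and the hexagon has six faces, so a naive approach produces many inequalities. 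I would streamline this by exploiting the $S_4$-symmetry of $X$ together with the symmetry of the hexagon to cut the generators into equivalence classes, and by observing that each vertex $\pm(e_i-e_j)$ of $B_X$ must map to a point of $B_V$ whose supporting functional is pinned down, so that the extremal constraints alone already contradict $\sum w_i=0$.

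Finally, I would record that this infeasibility holds for the chosen $V$ and invoke coordinate-permutation symmetry to conclude it for every subspace described in Lemma \ref{L:vertices}, completing the proof that no regular hexagonal subspace of $X$ is $1$-complemented, and hence, together with the first part of the argument, establishing Proposition \ref{P:no_1-compl}.
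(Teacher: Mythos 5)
Your overall framework (reduce to the representative subspace $V=\lin\{e_1-e_2,\,e_2-e_3\}$, parametrize the norm-one projection, and test it on the extreme points of $B_X$) is the same as the paper's, but there is a concrete error that breaks the key step. The vectors $\pm(e_i-e_j)$ are \emph{not} the vertices of $B_X=B_\infty^4\cap\{x:\sum x_i=0\}$. A vertex of this three-dimensional polytope must have three (hence, by the zero-sum condition, all four) coordinates equal to $\pm1$; the extreme points are therefore the six vectors with two coordinates $+1$ and two coordinates $-1$, and $B_X$ is combinatorially an octahedron. Each $e_i-e_j$ is merely an edge midpoint, e.g. $e_1-e_2=\tfrac12\bigl((1,-1,1,-1)+(1,-1,-1,1)\bigr)$. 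Consequently, requiring $\|P(e_i-e_j)\|\le 1$ for all six coordinate differences is \emph{not} equivalent to $\|P\|=1$, and the finite system of inequalities you propose to show infeasible is in fact feasible: writing $P(e_3-e_4)=(a,b-a,-b,0)$, the choice $a=b=0$ (i.e.\ kernel spanned by $e_3-e_4$) satisfies every one of your constraints, yet the resulting projection has norm $2$. So the claimed contradiction cannot be reached along your route.

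The paper's proof uses exactly the vertices you are missing. Testing $P$ on $(1,-1,1,-1)=v_1+v_3$ and $(1,-1,-1,1)=v_1-v_3$ (where $v_1=e_1-e_2$, $v_3=e_3-e_4$) gives $|1\pm a|\le1$ and $|-1\pm(b-a)|\le1$, forcing $a=b-a=0$, hence $P(e_3-e_4)=0$; then the remaining vertex $(1,1,-1,-1)=v_1+2v_2+v_3$ is sent to $(1,1,-2,0)$, which has norm $2$ --- the contradiction. To repair your argument you must replace the twelve coordinate differences by the six genuine extreme points $\pm(1,1,-1,-1)$, $\pm(1,-1,1,-1)$, $\pm(1,-1,-1,1)$ as the test vectors; with that correction the computation closes in two lines, as above. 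The appeal to $S_4$-symmetry to reduce to one representative subspace is fine and matches the paper's ``without loss of generality.''
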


\begin{proof}{Proof of Lemma \ref{L:vertices}} Consider a
two-dimensional subspace $H$ of $X$. It is easy to check that if
the unit ball of $H$ is a hexagon, then each extreme point of the
hexagon  is of the form: two coordinates are $1$ and $-1$, the
remaining two are $\alpha$ and $-\alpha$ for some $\alpha$
satisfying $|\alpha|\le 1$. Two different forms cannot give the
same extreme point unless the corresponding value of $\alpha$ is
$\pm 1$. Also two points of the same type cannot be present unless
the corresponding values of $\alpha$ are $+1$ and $-1$. Since
$B_H$ is a hexagon, there are $3$ pairs of extreme points. First
we consider the case when none of $\alpha_i$, $i=1,2,3$,
corresponding to an extreme point is $\pm 1$. Then $\pm 1$ either
form a cycle or a chain in the sense shown in
\eqref{E:cyclechain}.
\begin{equation}\label{E:cyclechain}
\left(\begin{array}{rrr} 1 &\alpha_2 & -1\\
-1 & 1 & \alpha_3\\
\alpha_1 & -1 & 1\\
-\alpha_1 & -\alpha_2 & -\alpha_3
\end{array}\right)\hbox{ or }
\left(\begin{array}{rrr} 1 &\alpha_2 &\alpha_3\\
-1 & 1 & -\alpha_3\\
\alpha_1 & -1 & 1\\
-\alpha_1 & -\alpha_2 & -1
\end{array}\right)
\end{equation}

If they form a cycle, by considering determinants (as after
\eqref{E:vec2}) with other unit vectors we get: all involved
$\alpha_i$ are zeros. Thus we get a subspace of the form described
in the statement of the lemma.
\medskip

We show that $\pm1$ cannot form a chain as in the second matrix in
\eqref{E:cyclechain} by showing that in such a case they cannot be
linearly dependent. In fact, multiplying the first column by
$\alpha_3$ and subtracting the resulting column from the third
column we get
$$
\left(\begin{array}{rrr} 1 &\alpha_2 & 0\\
-1 & 1 & 0\\
\alpha_1 & -1 & 1-\alpha_1\alpha_3\\
-\alpha_1 & -\alpha_2 & -1+\alpha_1\alpha_3
\end{array}\right).
$$
It is clear that this matrix has rank $3$.
\medskip

It remains to consider the case when some of the extreme points
have all coordinates $\pm1$. Assume WLOG that one of the extreme
points is $(1,1,-1,-1)$. If there is one more $\pm 1$ extreme
point (different from $(-1,-1,1,1)$), the section is a
parallelogram.\medskip

If the other extreme point is not a $\pm 1$ point, then it has
both $+1$ and $-1$ either in the first two positions or in the
last two positions (otherwise it is not an extreme point). In this
case the section is also a parallelogram, because the norm on
their linear combinations is just the $\ell_1$-norm
\end{proof}

\begin{proof}{Proof of Lemma \ref{L:no_1-compl}} In fact, assume
without loss of generality that we consider a two dimensional
subspace spanned by the vectors
$$\left(
\begin{array}{r} 1\\-1\\0\\0
\end{array}\right) \hbox{ and }
\left(\begin{array}{r} 0\\1\\-1\\0\end{array}\right).
$$
We need to show that there is no vector in this subspace such that
projecting the vector
$$\left(
\begin{array}{r} 0\\0\\1\\-1
\end{array}\right)$$
onto it we get a projection of norm $1$ on $X$. Assume the
contrary. Let
$$\left(
\begin{array}{c} a\\b-a\\-b\\0
\end{array}\right)$$
be the desired vector. The condition that the images of the
vectors
$$\left(
\begin{array}{r} 1\\-1\\\pm1\\\mp 1
\end{array}\right)
$$
under the projection are vectors of norm $\le 1$ implies
immediately that $a=(b-a)=0$. hence $a=b=0$. Now we get a
contradiction by projecting the vector
\[\left(
\begin{array}{r} 1\\1\\-1\\-1
\end{array}\right);\]
its image has norm $2$.~\rule{0.5em}{0.5em}
\end{proof}

\begin{small}

{~}
\vfill

\noindent{\it Department of Mathematics and Computer Science\\
St. John's University\\
8000 Utopia Parkway\\
Queens, NY 11439\\
USA\\
e-mail: {\tt ostrovsm@stjohns.edu}}

\end{small}
\end{large}
\end{document}